\pdfoutput=1
\documentclass[11pt,a4paper]{amsart}
\usepackage[margin=1in]{geometry}
\usepackage[T1]{fontenc}
\usepackage{textcase}
\usepackage[dvipsnames]{xcolor}
\usepackage{microtype}
\usepackage{fnpct}

\usepackage{amsmath}
\usepackage{amssymb}
\usepackage{mathtools}
\usepackage[mathscr]{eucal}
\renewcommand{\injlim}{\varinjlim}

\usepackage[inline]{enumitem}
\usepackage{booktabs}
\usepackage[pdfusetitle,colorlinks]{hyperref}
\hypersetup{bookmarksdepth=2,pdfencoding=unicode,allcolors=MidnightBlue}

\usepackage{zref-clever}
\zcsetup{abbrev=false,cap=true,nameinlink=false,sort=false,lang=english}
\newcommand{\cref}[1]{\zcref{#1}}
\newcommand{\Cref}[1]{\zcref[S]{#1}}
\zcsetup{pairsep={ and~},lastsep={, and~}}
\zcRefTypeSetup{equation}{Name-sg=,Name-pl=,refbounds={(,,,)}}
\zcRefTypeSetup{item}{Name-sg=,Name-pl=,refbounds={(,,,)}}

\NewDocumentCommand{\newzctheorem}{momo}{\IfValueTF{#4}
  {\newtheorem{#1}{#3}[#4]}
  {\IfValueTF{#2}
    {\AddToHook{env/#1/begin}{\zcsetup{countertype={#2=#1}}}\newtheorem{#1}[#2]{#3}}
    {\newtheorem{#1}{#3}}}}

\theoremstyle{plain}
\newzctheorem{theorem}[equation]{Theorem}
\newzctheorem{lemma}[equation]{Lemma}
\theoremstyle{definition}
\newzctheorem{definition}[equation]{Definition}
\newzctheorem{construction}[equation]{Construction}
\zcRefTypeSetup{construction}{Name-sg=Construction,Name-pl=Constructions}
\newzctheorem{example}[equation]{Example}
\newzctheorem{question}[equation]{Question}
\zcRefTypeSetup{question}{Name-sg=Question,Name-pl=Questions}
\theoremstyle{remark}
\newzctheorem{remark}[equation]{Remark}

\newcommand{\ann}{\operatorname{ann}}

\newcommand{\id}{\operatorname{id}}
\newcommand{\Spec}{\operatorname{Spec}}
\newcommand{\Spv}{\operatorname{Spv}}
\newcommand{\op}{\operatorname{op}}

\newcommand{\Tot}{\operatorname{Tot}}

\newcommand{\spec}{\textnormal{spec}}

\newcommand{\X}{\mathord{-}}
\newcommand{\Cat}[1]{\mathrm{#1}}

\title{Inaccessibility of the flat topology}
\author{Ko Aoki}
\address{Department of Mathematical Sciences,
  University of Copenhagen, Denmark
}
\email{aoki@math.ku.dk}
\author{Robert Burklund}
\address{Department of Mathematical Sciences,
  University of Copenhagen, Denmark
}
\email{rb@math.ku.dk}
\date{\today}

\begin{document}

\begin{abstract}
  We prove that
  singleton covers in the flat topology
  on affine schemes
  are not closed under \(\kappa\)-cofiltered limits
  for any regular cardinal~\(\kappa\).
  Therefore,
  for every accessible flat sheaf
  there exists a strictly finer topology
  for which it is still a sheaf.
  The flat topology thus contrasts with
  other big topologies, such as
  the arc and pure topologies.
\end{abstract}

\maketitle
\setcounter{tocdepth}{1}

Every accessible presheaf~\(F\) valued in an accessible \(\infty\)-category
on the category of affine schemes \(\Cat{Aff}\)
admits a finest finitary\footnote{This finitarity restriction is essential for \cref{xwizpy},
  as the canonical topology is not finitary.
} topology for which it is a sheaf,
which we denote by~\(\tau_{F}\).

\begin{example}[Olivier]\label{xwizpy}
  Let $F \coloneqq \id\colon \Cat{Aff}^{\op}\to\Cat{Aff}^{\op}$. 
  It follows from
  what was stated in~\cite{Olivier70}
  (and proven in~\cite{JoyalTierney84,Mesablishvili00,AndreFiorot22})
  that \(\tau_{F}\) is the pure topology.
  A map \(\Spec B\to\Spec A\) is a pure cover
  if and only if \(M\to M\otimes_{A}B\)
  is injective for any \(A\)-module~\(M\).
\end{example}

\begin{example}[Bhatt–Mathew, Rydh]\label{xyr7pm}
Let $F \coloneqq \Spec\colon \Cat{Aff}^{\op}\to(\Cat{Top}^{\spec})^{\op}$,
  where the target is the category of spectral topological spaces
  and quasicompact maps.
  Then \(\tau_{F}\) is the arc~topology of~\cite{BhattMathew21}.
\end{example}

\begin{example}\label{x7so2c}
  Let \(F\coloneq\Spv\colon\Cat{Aff}^{\op}\to(\Cat{Top}^{\spec})^{\op}\).
  Then \(\tau_{F}\) is the v~topology of~\cite{Rydh10,BhattScholze17}.
\end{example}

These topologies are important,
but the flat topology remains the most widely used.

\begin{question}\label{xu57pv}
  Is there an accessible presheaf~\(F\)
  such that \(\tau_{F}\) is the flat topology?
\end{question}

We call
a ring map \(A\to B\)
an \emph{fpqc cover}\footnote{\emph{Flat cover} is also a valid term;
  we avoid it in the body, since an fpqc cover need not be flat.
}
if \(\Spec B\to\Spec A\) is a cover in the flat topology;
i.e.,
there exists \(B\to C\)
with the composite \(A\to C\) faithfully flat.

In answering \cref{xu57pv}, we observe that
for accessible $F$ the singleton $\tau_F$-covers of rings
are closed under $\kappa$-filtered colimits for $\kappa\gg0$,
while the fpqc covers are \emph{not}
closed under $\kappa$-filtered colimits for any $\kappa$.

\begin{lemma}
  Let \(\kappa\) be an uncountable regular cardinal.
  If $F$ is a $\kappa$-accessible sheaf,
  then a $\kappa$-filtered colimit
  of singleton $\tau_F$-covers of rings is itself a $\tau_F$-cover.
\end{lemma}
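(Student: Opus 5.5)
We use the following characterization. Since \(\tau_{F}\) is the finest finitary topology for which \(F\) is a sheaf, a ring map \(A\to B\) is a singleton \(\tau_{F}\)-cover exactly when \(F\) satisfies descent along every base change of it. Concretely, for every \(A\)-algebra \(A'\), writing \(B'\coloneqq A'\otimes_{A}B\), the map
\[
  F(A')\to\Tot F\bigl(B'^{\otimes_{A'}(\bullet+1)}\bigr)
\]
must be an equivalence. (The singleton maps with this universal descent property are stable under base change and composition, and together with finite disjoint unions they generate a finitary topology for which \(F\) is a sheaf. By maximality they are therefore \(\tau_{F}\)-covers; the converse holds because \(F\) is a \(\tau_{F}\)-sheaf.) Here we regard \(F\) covariantly on rings, so that \(\kappa\)-accessibility means \(F\) preserves \(\kappa\)-filtered colimits of rings.

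Let \((A_i\to B_i)_{i\in I}\) be a \(\kappa\)-filtered diagram of singleton \(\tau_{F}\)-covers with colimit \(A\to B\), and fix an \(A\)-algebra \(A'\). The first step removes the varying base. For each \(i\), the map \(A'\to C_i\coloneqq A'\otimes_{A_i}B_i\) is a base change of \(A_i\to B_i\), so it is again a singleton \(\tau_{F}\)-cover. Moreover, the \(C_i\) form a \(\kappa\)-filtered diagram of \(A'\)-algebras. Since tensor products commute with filtered colimits,
\[
  \operatorname*{colim}_i C_i\cong A'\otimes_{A}B=B',
\]
and the same holds for all iterated tensor powers: \(\operatorname*{colim}_i C_i^{\otimes_{A'}(n+1)}\cong B'^{\otimes_{A'}(n+1)}\). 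We are thus reduced to a single base \(A'\) and a \(\kappa\)-filtered system of covers \(A'\to C_i\) with colimit \(A'\to B'\).

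By accessibility, \(F(B'^{\otimes(n+1)})\simeq\operatorname*{colim}_i F(C_i^{\otimes(n+1)})\) for each \(n\). Since each \(A'\to C_i\) is a cover, \(F(A')\simeq\Tot F(C_i^{\otimes(\bullet+1)})\) for every \(i\), and so \(F(A')\simeq\operatorname*{colim}_i\Tot F(C_i^{\otimes(\bullet+1)})\) as a constant \(\kappa\)-filtered colimit. Descent along \(A'\to B'\) therefore follows once we know that
\[
  \operatorname*{colim}_i\Tot F\bigl(C_i^{\otimes(\bullet+1)}\bigr)\to\Tot\operatorname*{colim}_i F\bigl(C_i^{\otimes(\bullet+1)}\bigr)
\]
is an equivalence.

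This interchange of \(\Tot\) with a \(\kappa\)-filtered colimit is the main obstacle, and it is where we use that \(\kappa\) is uncountable. \(\Tot\) is a limit over \(\Delta\), which is countable, hence \(\kappa\)-small. To commute it with the colimit, we test against \(\kappa\)-compact objects \(X\) of the target. In a \(\kappa\)-accessible \(\infty\)-category these jointly detect equivalences, and \(\operatorname{Map}(X,\X)\) turns \(\kappa\)-filtered colimits into colimits of spaces. In spaces, \(\kappa\)-filtered colimits commute with \(\kappa\)-small limits, which gives the interchange after applying \(\operatorname{Map}(X,\X)\). One caveat must be checked: the relevant \(\Tot\)'s must exist and be preserved by \(\operatorname{Map}(X,\X)\). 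For the \(C_i\) they exist because they are computed as \(F(A')\). For \(B'\), one has to argue that the colimit of these limit cones is again a limit cone, using that \(\operatorname{Map}(X,\X)\) preserves whatever limits exist. If the target is only accessible and not complete, this is the step that needs care. I would first try to handle it by embedding the target into its presentable Ind-completion, or into \(\kappa\)-small presheaves, where the commutation holds outright, and then reflect the equivalence back.
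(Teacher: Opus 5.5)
Your proposal is correct and is essentially the paper's argument: base change to a fixed \(A'\), write \(F(A')\) as the constant \(\kappa\)-filtered colimit of the descent equivalences, commute the countable limit \(\Tot\) past the \(\kappa\)-filtered colimit using uncountability, and conclude by accessibility of \(F\). Your extra care about whether \(\Tot\) exists in a non-complete target is sound: embedding into presheaves on \(\kappa\)-compact objects preserves \(\kappa\)-filtered colimits and reflects limits, and the paper leaves this detail implicit. You should drop the aside about adjoining finite disjoint unions, though, since it is unnecessary and fails when \(F\) does not take finite products of rings to products.
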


\begin{proof}
  Let $A_i \to B_i$ be a $\kappa$-filtered diagram of $\tau_F$-covers of rings
  with colimit $A \to B$.
  Using our uncountability hypothesis,
  for any $A \to A'$, we have
  \begin{equation*}
    F(A')
    \simeq \injlim_{i} \Tot F\Bigl(A' \otimes_{A_i} B_i^{\otimes_{A_i} \bullet+1}\Bigr)
    \simeq \Tot \injlim_{i} F\Bigl(A' \otimes_{A_i} B_i^{\otimes_{A_i} \bullet+1}\Bigr)
    \simeq \Tot F\bigl(A' \otimes_A B^{\otimes_{A} \bullet+1}\bigr).
    \qedhere
  \end{equation*}
\end{proof}

\begin{theorem}\label{x6cp88}
  For any regular cardinal~\(\kappa\),
  there is a \(\kappa\)-filtered family
  of fpqc covers of rings
  whose colimit is not an fpqc cover.
\end{theorem}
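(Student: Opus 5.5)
The plan is to exhibit the failure over a valuation ring, using the torsion-freeness criterion for flatness. Over a valuation ring~\(V\) a module is flat if and only if it is torsion-free. Consider a map \(V\to K\times W\), where \(K=\operatorname{Frac}V\) and \(W\) is an algebra killed by a nonzero element of the maximal ideal~\(\mathfrak m\), or more generally a torsion algebra. Such a map is \emph{not} an fpqc cover when \(\mathfrak m\) is not principal, or in the analogous limiting situation below. The reason: if \(K\times W\to C\) with \(V\to C\) faithfully flat, then \(C\simeq C_1\times C_2\), and torsion-freeness forces \(C_2=0\). So \(C\) is a \(K\)-algebra, \(\mathfrak m C=C\), and faithfulness fails. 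The whole task is therefore to write such a non-cover as a \(\kappa\)-filtered colimit of genuine fpqc covers.

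\textbf{Step 1 (the ring).} Choose a valuation ring \(V\) whose value group~\(\Gamma\) has a chain of convex subgroups of cofinality~\(\kappa\). Equivalently, \(\Spec V\) contains a chain of primes \((\mathfrak p_\alpha)_{\alpha<\kappa}\) whose union (or intersection, depending on orientation) is a prime~\(\mathfrak q\) that is not attained at any stage. For instance, take \(\Gamma=\bigoplus_{\alpha<\kappa}\mathbb Z\) ordered lexicographically, and \(V\) the associated Hahn/Krull valuation ring over a field. Since \(\kappa\) is regular, any \(\kappa\)-small subsystem stays strictly on one side of~\(\mathfrak q\). This is what will make every stage a cover while the colimit is not.

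\textbf{Step 2 (the stages).} For \(\alpha<\kappa\), set \(A_\alpha\coloneqq V\) (a constant diagram) and
\[
B_\alpha\coloneqq V_{\mathfrak p_\alpha}\times V_\alpha',
\]
where \(V_\alpha'\) is a flat \(V\)-algebra supported exactly on the closed part \(V(\mathfrak p_\alpha)\) of \(\Spec V\) that \(V_{\mathfrak p_\alpha}\) misses. The natural candidate is the \(\mathfrak p_\alpha\)-adic completion of~\(V\), or a localization at a complementary element, chosen so that \(V\to B_\alpha\) is flat and surjective on spectra. Then each \(V\to B_\alpha\) is faithfully flat, hence an fpqc cover. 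The transition maps \(B_\alpha\to B_\beta\) must be arranged compatibly, which is why the factors should be chosen functorially in~\(\alpha\).

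\textbf{Step 3 (the colimit).} Compute \(B=\injlim_\alpha B_\alpha\). The goal is that the first factors accumulate to \(V_{\mathfrak q}\), or to~\(K\) in the extreme case. Meanwhile the second factors should become a \(V\)-algebra in which every element is torsion, i.e.\ killed by some element of \(\mathfrak q\). The regularity of~\(\kappa\) ensures that no single stage already witnesses the limit. Once this is in place, the criterion from the first paragraph, applied after localizing at~\(\mathfrak q\), shows \(V\to B\) is not an fpqc cover: a faithfully flat \(C\) under \(B\) would be torsion-free, so it would kill the torsion factor and fail to be faithful at~\(\mathfrak q\).

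\textbf{Main obstacle.} I expect the hard part to be the choice of the second factors \(V'_\alpha\). They must be \emph{flat} (hence torsion-free) at each stage, yet their colimit must become torsion, or at least fail to see~\(\mathfrak q\). Completions can behave badly under colimits, so I would first try to make \(V'_\alpha\) a localization of a suitable auxiliary (e.g.\ henselian or ind-étale) extension, where colimits are easy to compute. If that fails, I would instead let the base rings \(A_\alpha\) vary as well, taking \(A_\alpha\) to be valuation rings of smaller rank with colimit~\(V\), and use \(B_\alpha=A_\alpha[1/a_\alpha]\times\widehat{A_\alpha}\) with \(v(a_\alpha)\to\mathfrak q\).
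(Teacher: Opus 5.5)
Your torsion criterion in the first paragraph is fine. However, the plan for realizing it as a colimit cannot work, because you make every stage faithfully flat, and faithfully flat ring maps are closed under filtered colimits. The paper's closing remark on Waterhouse relies on exactly this fact.

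If each \(V\to B_\alpha\) is flat, then so is \(V\to B\coloneqq\injlim_\alpha B_\alpha\), since \(\operatorname{Tor}\) commutes with filtered colimits. For each prime \(\mathfrak p\subset V\) with residue field \(\ell\), the fiber \(B\otimes_V\ell=\injlim_\alpha(B_\alpha\otimes_V\ell)\) is a filtered colimit of nonzero rings, hence nonzero. So \(V\to B\) is faithfully flat, no matter how you choose \(V'_\alpha\) and the transition maps.

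In your own language: a filtered colimit of torsion-free \(V\)-modules is torsion-free. If \(ab=0\) in the colimit with \(0\neq a\in V\), then \(ab_\beta=0\) at some stage, so \(b_\beta=0\). Hence flat second factors can never ``become torsion'', and what you call the main obstacle is an impossibility. The fallback with varying bases \(A_\alpha\) fails the same way. There \(V\to B\) is the filtered colimit of the base changes \(V\to V\otimes_{A_\alpha}B_\alpha\), which are faithfully flat.

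The missing idea is that the stages must be fpqc covers that are \emph{not} flat. The paper uses maps with a section: the diagonals \(A\to A^{J}\) for \(J\) in the \(\kappa\)-Fr\'echet filter on~\(\kappa\), which is \(\kappa\)-cofiltered by regularity. Each diagonal is split by a coordinate projection, and the colimit is \(B=\injlim_{J}A^{J}\). The base ring must then make these products badly non-flat, which is why \(A\) has zero divisors:
\begin{itemize}
\item \(\ann_A(x)=\langle y_\alpha\mid\alpha<\kappa\rangle\);
\item \(y_\alpha z_\beta=0\) for \(\alpha<\beta\);
\item \(y_\alpha z_\beta=z_\beta^2\neq0\) for \(\beta\le\alpha\).
\end{itemize}

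Given \(f\colon B\to C\) with \(h=f\circ\Delta\) flat, the element \(y=(y_\alpha)_\alpha\) kills \(\Delta(x)\). So \(f(y)\in\ann_C(h(x))=\ann_A(x)C\) lies in the ideal generated by finitely many \(h(y_s)\). Choosing \(\alpha\) above all such \(s\) gives \(h(z_\alpha^2)=f(y)h(z_\alpha)=0\), so \(h\) is not injective.

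A valuation-ring base cannot substitute for this. Over a valuation ring \(V\), the product \(V^{J}\) is torsion-free, so \(V\to V^{J}\) is flat and split, hence faithfully flat. More fundamentally, a singleton fpqc cover of \(\Spec V\) is the same as a v-cover: a v-cover yields an extension of valuation rings \(V\to W\) through \(B\), and such an extension is faithfully flat. By the paper's lemma applied to \(F=\Spv\), v-covers are closed under \(\kappa\)-filtered colimits for \(\kappa\gg0\). So no choice of stages over a valuation ring can give the theorem for all~\(\kappa\).
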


\begin{proof}
    Let \(\kappa\) be a regular cardinal
    and \(G\) the \(\kappa\)-Fréchet filter on~\(\kappa\);
    i.e., \(J\in G\) if and only if
    \(\lvert\kappa\setminus J\rvert<\kappa\).
    Regularity implies \(G\) is \(\kappa\)-cofiltered.
    Let \(k\) be a field, let
    \begin{align*}
      A &\coloneqq \frac
      {k[x, y_{\alpha}, z_{\alpha} \mid \alpha <\kappa]}
      {
        \langle xy_{\alpha} \mid \alpha <\kappa\rangle
        +
        \langle y_{\alpha}z_{\beta} \mid \alpha < \beta <\kappa\rangle
        +
        \langle y_{\alpha}z_{\beta} - z_{\beta}^{2} \mid \beta\leq\alpha<\kappa\rangle
      },&
      B &\coloneqq \injlim_{J \in G^{\op}} A^{J},
    \end{align*}
and let \(y \coloneqq (y_{\alpha})_{\alpha < \kappa} \in B\).
  
  The diagonal map \( \Delta \colon A\to B \)
  is a \(\kappa\)-filtered colimit of split injections.
  We prove that \(\Delta\) is not an fpqc cover.
  More specifically, 
  given $f\colon B \to C$ 
  with $h \coloneqq f \circ \Delta$ flat
  we show that $h$ is not injective,
  hence not faithfully flat.
  We proceed in the following steps:
  \begin{enumerate}
    \item\label{i:sq}
      \(z_{\alpha}^{2} \neq 0\)
      for any~\(\alpha < \kappa\).
    \item\label{i:ann}
      \(\ann_{A}(x) = \langle y_{\alpha} \mid \alpha<\kappa\rangle\).
    \item\label{i:flat}
      $h(z_\alpha^2) = 0$
      for some $\alpha < \kappa$.
  \end{enumerate}
  For~\cref{i:sq},
  consider the map $g\colon A \to k[t]$ given by
  \(g(x) = 0\),
  \(g(y_{\gamma}) = 0\) for \(\gamma<\alpha\),
  \(g(y_{\gamma}) = t\) for \(\gamma\geq\alpha\),
  \(g(z_{\alpha}) = t\) and
  \(g(z_{\gamma}) = 0\) for \(\gamma\neq\alpha\).
  For~\cref{i:ann}, one inclusion is clear.
  The other follows by observing that
  \(x\) is regular in the quotient
  \(
  A/\langle y_{\alpha}\mid\alpha<\kappa\rangle
  \simeq
  k[x, z_{\alpha}\mid\alpha<\kappa]/
  \langle z_{\alpha}^{2}\mid\alpha<\kappa\rangle
  \).
  We then prove~\cref{i:flat}.
  From the flatness of~$h$, we deduce that
  \(\ann_{C}(h(x))\simeq\ann_{A}(x)C\).
  By \cref{i:ann} and the relation $\Delta(x)y = 0$,
  we conclude that there exists a finite \(S \subset \kappa\) and a 
  \(c \in C^S\) such that 
  \( f(y) = c \cdot(h(y_s))_{s \in S} \).
  Let \(\alpha \coloneqq \max(S\cup\{0\})+ 1 < \kappa\).
  We compute that
  \begin{equation*}
    h(z_{\alpha}^{2})
    =f(y) h(z_{\alpha})
    =( c \cdot(h(y_s))_{s \in S} ) h(z_{\alpha})
    = c \cdot(h(y_s z_\alpha))_{s \in S}
    =0.
    \qedhere
  \end{equation*}
\end{proof}

\begin{remark}
  Despite \Cref{x6cp88},
  accessible flat sheaves remain a well-behaved notion,
  as Waterhouse~\cite{Waterhouse75}
  proved that
  accessible presheaves have accessible flat sheafifications. 
  This relies on the fact that
  faithfully flat ring maps are closed under filtered colimits
  and
  any faithfully flat map is an \(\aleph_{1}\)-filtered colimit
  of those between countable rings.
What \Cref{x6cp88} demonstrates is that
  it is essential to work with the flat \emph{pretopology} to establish this.
\end{remark}

\begin{remark}\label{xh9czb}
  Our use of $\kappa$-filtered colimits of products
  in the proof of \cref{x6cp88} is not accidental:
  Suppose \(A\to B\) is a \(\kappa\)-filtered colimit of fpqc covers \(A_{i}\to B_{i}\).
  \begin{itemize}
    \item
      Applying \(\X\otimes_{A_{i}}A\), we may assume all maps share source~\(A\).
    \item
      For each~\(i\), take a map \(B_{i}\to C_{i}\)
      such that \(A\to C_{i}\) is faithfully flat.
      Applying \(\X\otimes_{A}\bigotimes_{i}C_{i}\),
      we may assume each \(A\to B_{i}\) admits a section.
    \item
      Let~\(S_{i}\) be the set of sections of \(A\to B_{i}\).
      We may replace~\(B_{i}\) with~\(A^{S_{i}}\).
    \item
      Moreover, applying \cref{xsamv0} below,
      we may replace~\((S_{i})_{i}\) with a \(\kappa\)-complete filter.
  \end{itemize}
\end{remark}

\begin{lemma}\label{xsamv0}
  Let \(P\) be a poset.
  Any \(P\)-indexed family
  of nonempty sets \((S_{p})_{p}\)
  receives a map from a \(P\)-indexed family with injective transition maps.
\end{lemma}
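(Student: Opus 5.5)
The plan is to write down a free construction explicitly. I read a \(P\)-indexed family as a functor \(S\colon P\to\Cat{Set}\): for \(p\leq p'\) there is a transition map \(S_{p}\to S_{p'}\), compatible with composition. The natural candidate is the left Kan extension of the underlying discrete family along the inclusion of the discrete poset into~\(P\), that is, the coproduct of representables
\[
  T \coloneqq \coprod_{q\in P} S_{q}\times P(q,\X).
\]
Concretely, \(T_{p}\) is the set of pairs \((q,s)\) with \(q\leq p\) and \(s\in S_{q}\). For \(p\leq p'\), the transition map \(T_{p}\to T_{p'}\) is the inclusion \((q,s)\mapsto(q,s)\), which makes sense because \(q\leq p\leq p'\).

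The steps, in order, are as follows.
\begin{enumerate}
  \item Check that \(T\) is a functor. Its transition maps are inclusions, so identities and composites are respected trivially.
  \item Note that these transition maps are injective by construction.
  \item Define \(\eta_{p}\colon T_{p}\to S_{p}\) by sending \((q,s)\) to the image of \(s\) under the transition map \(S_{q}\to S_{p}\).
  \item Verify naturality. For \(p\leq p'\) and \((q,s)\in T_{p}\), the two ways around the square send \((q,s)\) to the image of \(s\) under \(S_{q}\to S_{p'}\) and to the image of (the image of \(s\) in~\(S_{p}\)) under \(S_{p}\to S_{p'}\). These agree by functoriality of~\(S\), using \(S_{q}\to S_{p}\to S_{p'}\) equal to \(S_{q}\to S_{p'}\).
  \item Observe that each \(T_{p}\) is nonempty, since it contains \((p,s)\) for any \(s\in S_{p}\neq\emptyset\). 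In fact \(\eta_{p}\) is even surjective, so \(\eta\) is an objectwise surjection. This is more than the statement asks, but it is convenient for the application in \cref{xh9czb}: replacing \(S_{p}\) by \(T_{p}\) still yields a nonempty family of sections.
\end{enumerate}

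I expect no real obstacle. The only point needing care is the variance convention: a contravariant ``compatible families over the down-set'' construction would give injective maps in the wrong direction, so one must use the coproduct/left Kan extension form above. If the intended convention in \cref{xh9czb} were instead contravariant, with \(P\) replaced by \(P^{\op}\), the same argument applies verbatim to \(P^{\op}\), since the construction works for an arbitrary poset.
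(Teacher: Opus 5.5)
Your proof is correct, but it takes a different route from the paper.

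The paper's construction is as follows. It lets \(T_{p}\subseteq\prod_{q\in P}S_{q}\) be the tuples \((s_{q})_{q}\) with \(s_{q}\) the image of \(s_{p}\) for all \(q\geq p\). Thus \(T_{p}\cong S_{p}\times\prod_{q\not\geq p}S_{q}\), the transitions are inclusions inside the product, and \(T_{p}\to S_{p}\) is the projection.

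You instead take \(T_{p}=\coprod_{q\leq p}S_{q}\), the coproduct of representables, i.e.\ the counit of the adjunction between restriction to the discrete poset and left Kan extension. This buys several things:
\begin{enumerate*}[label=(\roman*)]
  \item it is choice-free, whereas the paper needs the axiom of choice, and the nonemptiness of \emph{every} \(S_{q}\) (including those incomparable to~\(p\)), to see that \(\prod_{q\not\geq p}S_{q}\neq\emptyset\);
  \item it is canonical and functorial in~\(S\);
  \item surjectivity of \(T_{p}\to S_{p}\) is immediate;
  \item the sets are typically much smaller, a sum rather than a product.
\end{enumerate*}
The paper's version is equally short.

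What the application in \cref{xh9czb} actually uses is that the \(T_{p}\) are nonempty subsets of a single ambient set, with inclusions as transitions. Then for \(P=I^{\op}\) with \(I\) \(\kappa\)-filtered, they form a base of a \(\kappa\)-complete proper filter. Your \(T_{i}\subseteq\coprod_{k}S_{k}\) serve this purpose just as well as the paper's \(T_{i}\subseteq\prod_{k}S_{k}\).

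One point of emphasis: nonemptiness of \(T_{p}\) is not extra. It is the implicit content of the lemma, since otherwise the empty family would trivially work. The paper checks it explicitly, and so do you.
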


\begin{proof}
  Let \(T_{p}\) be the subset of \(\prod_{q\in P}S_{q}\)
  consisting of \((s_{q})_{q}\)
  such that \(s_{q}\) is the image of \(s_{p}\) for any \(q\geq p\).
  Since \(T_{p}\simeq S_{p}\times\prod_{q\not\geq p}S_{q}\),
  it is nonempty.
  This defines a \(P\)-indexed family
  with injective transitions,
  and the projection \(T_{p}\to S_{p}\)
  yields the desired map.
\end{proof}

\subsection*{Acknowledgments}\label{ss:ack}

We would like to thank
Ishan Levy for helpful discussions.
During the course of this work
we were supported
by the Danish National Research Foundation
through the Copenhagen Center for Geometry and Topology (DNRF151).

\bibliographystyle{alpha}

\begin{thebibliography}{Wat75}

\bibitem[AF22]{AndreFiorot22}
Yves Andr\'e and Luisa Fiorot.
\newblock On the canonical, fpqc, and finite topologies on affine schemes. {T}he state of the art.
\newblock {\em Ann. Sc. Norm. Super. Pisa Cl. Sci. (5)}, 23(1):81--114, 2022.

\bibitem[BM21]{BhattMathew21}
Bhargav Bhatt and Akhil Mathew.
\newblock The arc-topology.
\newblock {\em Duke Math. J.}, 170(9):1899--1988, 2021.

\bibitem[BS17]{BhattScholze17}
Bhargav Bhatt and Peter Scholze.
\newblock Projectivity of the {W}itt vector affine {G}rassmannian.
\newblock {\em Invent. Math.}, 209(2):329--423, 2017.

\bibitem[JT84]{JoyalTierney84}
Andr\'e{} Joyal and Myles Tierney.
\newblock An extension of the {G}alois theory of {G}rothendieck.
\newblock {\em Mem. Amer. Math. Soc.}, 51(309):vii+71, 1984.

\bibitem[Mes00]{Mesablishvili00}
Bachuki Mesablishvili.
\newblock Pure morphisms of commutative rings are effective descent morphisms for modules---a new proof.
\newblock {\em Theory Appl. Categ.}, 7:No. 3, 38--42, 2000.

\bibitem[Oli70]{Olivier70}
Jean-Pierre Olivier.
\newblock Descente par morphismes purs.
\newblock {\em C. R. Acad. Sci. Paris S\'er. A-B}, 271:A821--A823, 1970.

\bibitem[Ryd10]{Rydh10}
David Rydh.
\newblock Submersions and effective descent of \'etale morphisms.
\newblock {\em Bull. Soc. Math. France}, 138(2):181--230, 2010.

\bibitem[Wat75]{Waterhouse75}
William~C. Waterhouse.
\newblock Basically bounded functors and flat sheaves.
\newblock {\em Pacific J. Math.}, 57(2):597--610, 1975.

\end{thebibliography}

\end{document}